\newtheorem{thm}{Theorem}[section]
\newtheorem{lem}[thm]{Lemma}
\newtheorem{cor}[thm]{Corollary}
\theoremstyle{definition}
\newtheorem{defn}[thm]{Definition}
\newtheorem{ex}[thm]{Example}
\newtheorem{remk}[thm]{Remark}
\newcommand{\kc}{\mathcal{C}}
\newcommand{\pc}{\mathcal{P}_\mathcal{C}}
\newcommand{\specificthanks}[1]{\@fnsymbol{#1}}% Inserts a specific \thanks symbol
\title{A link between Kendall's $\tau$, the length measure and the surface of bivariate copulas, 
and a consequence to copulas with self-similar support}
\author{Juan Fern\'andez-S\'anchez\thanks{Grupo de Investigaci\'on de An\'alisis Matem\'atico, Universidad de Almer\'{\i}a, La Ca\~nada de San Urbano, Almer\'{\i}a, Spain, E-mail: juanfernandez@ual.es} \and
Wolfgang Trutschnig\thanks{Department for Artificial Intelligence and Human Interfaces, University of Salzburg, Austria,
E-Mail: wolfgang@trutschnig.net (corresponding author)}}
\begin{document}

\maketitle
\begin{abstract}
Working with shuffles we establish a close link between 
Kendall's $\tau$, the so-called length measure, and the surface area of bivariate copulas and derive some consequences.  
While it is well-known that Spearman's $\rho$ of a bivariate copula $A$ is a rescaled version of the volume of the area under 
the graph of $A$, in this contribution we show that the other famous concordance measure, Kendall's $\tau$, allows for a simple geometric interpretation as well - it is inextricably linked to the surface area of $A$.  
\end{abstract}

\section{Introduction}
Spearman's $\rho$ of a bivariate copula $A$ is a rescaled version of the volume below the graph of $A$ 
(see \cite{DS,Nel}) in the sense that
$$
\rho(A)= 12 \int_{[0,1]^2} A \,d\lambda_2 - 3
$$
holds. Letting $[A]_t :=\{(x,y) \in [0,1]^2: A(x,y) \geq t\}$ denote the lower $t$-cut of $A$ for every $t \in [0,1]$ 
and applying Fubini's theorem directly yields
$$
\rho(A)= 12 \int_{[0,1]} \lambda_2([A]_t)d\lambda(t) - 3,
$$
which lead the authors of \cite{CGST} to conjecturing that adequately rescaling the so-called length 
measure $\ell(A)$ of $A$, defined
as the average arc-length of the contour lines of $A$, might result in a (new or already known) concordance measure. 
The conjecture was falsified in \cite{CGST}, only some but not all properties of a concordance measure 
are fulfilled, in particular, we do not have continuity with respect to pointwise convergence of copulas in general.  

Motivated by the afore-mentioned facts, the objective of this note is two-fold: we first derive the somewhat 
surprising result that on a subfamily of 
bivariate copulas - the class $\kc_{mcd}$ of all mutually completely dependent copulas (including all 
classical shuffles) - which is dense  
in the class $\kc$ of all bivariate copulas with respect to uniform convergence, the length measure is, in fact, an 
affine transformation of Kendall's $\tau$ and vice versa. As a consequence, the length measure 
restricted to $\kc_{mcd}$ is continuous with respect to pointwise convergence of copulas. 
We then focus on the surface area of bivariate copulas and derive analogous statements, i.e., 
that on the class $\kc_{mcd}$ the surface area is an affine transformation of Kendall's $\tau$ 
(and hence of the length measure) too. For obtaining both main results a simple geometric identity 
linking the length measure and the surface area with the area of the set $\Omega_{\sqrt{2}}$, given by
\begin{equation}
\Omega^{A_h}_{\sqrt{2}} = \left\{(x,y) \in [0,1]^2: h(x) \leq y, \, h^{-1}(y) \leq x \right\},
\end{equation}
where $h$ denotes the transformation corresponding to the completely dependent copula $A_h$, will be key. 
An application to calculating Kendall's $\tau$, the length measure and 
the surface area of completely dependent copulas with self-similar support concludes the paper.

\section{Notation and preliminaries}
In the sequel we will let $\mathcal{C}$ denote the family of all bivariate copulas. For each copula 
$C\in\mathcal{C}$ the corresponding doubly stochastic measure will be
denoted by $\mu_C$, i.e., $\mu_C([0,x] \times [0,y]) = C(x,y)$ holds for all $x,y \in [0,1]$.
Considering the uniform metric $d_\infty$ on $\mathcal{C}$ it is well-known that $(\mathcal{C}, d_\infty)$ is a 
compact metric space and that in $\mathcal{C}^d$ pointwise and uniform convergence are equivalent. 
For more background on copulas and doubly stochastic measures we refer to \cite{DS, Nel}.\\
For every metric space $(\Omega,d)$ the Borel $\sigma$-field in $\Omega$ will be denoted by $\mathcal{B}(\Omega)$. 
The Lebesgue measure on the Borel $\sigma$-field $\mathcal{B}([0,1]^2)$ of $[0,1]^2$ will be denoted by 
$\lambda_2$, the univariate version on $\mathcal{B}([0,1])$ by $\lambda$. 
Given probability spaces $(\Omega,\mathcal{A},\mathbb{P})$ and $(\Omega',\mathcal{A}',\mathbb{P})'$ and a 
measurable transformation $T: \Omega \rightarrow \Omega'$ the push-forward of 
$\mathbb{P}$ via $T$ will be denoted by $\mathbb{P}^T$, i.e., $\mathbb{P}^T(F)=\mathbb{P}(T^{-1}(F))$ for 
all $F \in \mathcal{A}'$. \\
In what follows, Markov kernels will be a handy tool. A mapping 
$K\colon\mathbb{R} \times \mathcal{B}(\mathbb{R}) \to [0,1]$ is called a Markov kernel from $(\mathbb{R},\mathcal{B}(\mathbb{R}))$ to $(\mathbb{R},\mathcal{B}(\mathbb{R}))$ if the mapping $x \mapsto K(x,B)$ is measurable for every fixed $B \in \mathcal{B}(\mathbb{R})$ and the mapping 
$B \mapsto K(x, B)$ is a probability measure for every fixed $x \in \mathbb{R}$. A Markov kernel $K\colon\mathbb{R} \times \mathcal{B}(\mathbb{R}) \to [0,1]$ is called regular conditional distribution of a (real-valued) random variable $Y$ given (another random variable) $X$ if for every $B \in \mathcal{B}(\mathbb{R})$ 
\begin{align*}
K(X(\omega),B) = \mathbb{E}(\mathds{1}_B \circ Y | X)(\omega)
\end{align*}
holds $\mathbb{P}$-a.s. It is well known that a regular conditional distribution of $Y$ given $X$ exists and is unique 
$\mathbb{P}^X$-almost surely. For every $A \in \mathcal{C}$ (a version of) the corresponding regular conditional distribution (i.e., the regular conditional distribution of $Y$ given $X$ in the case that $(X,Y)\sim A$) will be denoted by 
$K_A(\cdot, \cdot)$ and directly be interpreted as mapping from $K_A: \,[0,1] \times \mathcal{B}([0,1]) \rightarrow [0,1]$. 
Note that for every $A \in \mathcal{C}$ and Borel sets $E,F \in \mathcal{B}([0,1])$ we have the following disintegration formulas:
\begin{align}\label{eq:cop_margin}
\int_E K_A(x,F) d\lambda(x) = \mu_A(E \times F) \quad \text{ and } \quad \int_{[0,1]} K_A(x, F) d\lambda(x) = \lambda(F)
\end{align}
For more details and properties of conditional expectations and regular conditional distributions we refer to  \cite{KALLENBERG,Klenke}. \\
A copula $A \in \kc$ will be called completely dependent if there exists some $\lambda$-preserving
transformation $h: [0,1] \rightarrow [0,1]$ (i.e., a transformation with $\lambda^h=\lambda$) such that 
$K(x,E)=\mathbf{1}_E(h(x))$ is a Markov kernel of $A$. The copula induced by $h$ will be denoted by $A_h$, 
the class of all completely dependent copulas by $\kc_{cd}$. 
A completely dependent copula $A_h$ is called mutually completely dependent, if
the transformation $h$ is bijective. Notice that in this case the transpose $A_h^t$ of $A_h$, defined by
$A_h^t(x,y)=A_h(y,x)$, coincides with $A_{h^{-1}}$. The family of all mutually completely dependent copulas will 
be denoted by $\kc_{mcd}$. It is well known (see \cite{DS,Nel}) that $\kc_{mcd}$ is dense in $(\kc,d_\infty)$, in fact
even the family of all equidistant even shuffles (again see \cite{DS,Nel}) is dense. For 
further properties of completely dependent copulas we refer to \cite{T06} and the references therein.  

Turning towards the length profile introduced and studied in \cite{CGST}, let $\Gamma_{A,t}$ denote the boundary 
of the lower $t$-cut $[A]_t$ in $(0,1)^2$ and $H_1(\Gamma_{A,t})$ it's arc-length. 
Then the length profile of $A$ is defined as the function
$L_A: [0,1] \rightarrow [0,\infty)$, given by 
\begin{equation}
L_A(t)=H_1(\Gamma_{A,t}).
\end{equation}
It is straightforward to show see that
$$
\sqrt{2}(1-t) \leq L_A(t) \leq 2(1-t)
$$
holds for every $t \in (0,1)$. Building upon $L_A$ the so-called length measure $\ell(A)$ of $A$ is defined as
\begin{equation}
\ell(A)= \int_{(0,1)} L_A(t) d\lambda(t)
\end{equation}
and describes the average arc-length of upper $t$-cuts of $A$. It is straightforward to verify that
\begin{equation}\label{eq:ineq}
\ell(W)=\frac{1}{\sqrt{2}} \leq \ell(A) \leq 1 = \ell(M)
\end{equation} 
as well as $\ell(A) \in [\frac{1}{\sqrt{2}},1]$ holds (ineq. (\ref{eq:ineq}) was also one of the reasons for falsely 
conjecturing that the length measure might be transformable into a concordance measure). 

In \cite{CGST} it was shown that for mutually completely dependent copulas $A_h$ the length profile allows for a simple 
calculation. In fact, using the co-area formula we have 
$$
\ell(A_h)= \int_{(0,1)^2} \Vert \nabla A_h(u,v) \Vert_2 \, d\lambda_2(u,v),
$$
where $\nabla A_h$ denotes the gradient of $A_h$ (whose existence $\lambda_2$-almost everywhere 
is assured by Rademacher's theorem and Lipschitz continuity, see \cite{EG}).
The last equation simplifies to the nice identity
\begin{equation}\label{eq:lm}
\ell(A_h) = 1-(2-\sqrt{2})\, \lambda_2(\Omega_{\sqrt{2}}),
\end{equation} 
with 
\begin{eqnarray}
\Omega_{\sqrt{2}}^{A_h}:=\Omega_{\sqrt{2}} &=& 
 \left\{(u,v) \in (0,1)^2: \Vert \nabla A_h(u,v) \Vert_2 =\sqrt{2}\right\} \nonumber  \\ 
&=& \left\{(u,v) \in (0,1)^2: h(u) \leq v, \, h^{-1}(v) \leq u \right\}
\end{eqnarray}
Throughout the rest of this note we will only write $\Omega_{\sqrt{2}}$ instead of 
$\Omega_{\sqrt{2}}^{A_h}$ whenever no confusion will arise.
Notice that for classical equidistant straight shuffles eq. (\ref{eq:lm}) implies that $\ell(A_h)$ can be calculated by simply
counting squares as Figure \ref{fig:Omega} illustrates in terms of two simple examples - one shuffle with 
three, and a second one with nice equidistant stripes.
\begin{figure}[!h]
      \begin{center}
     \includegraphics[width = \textwidth]{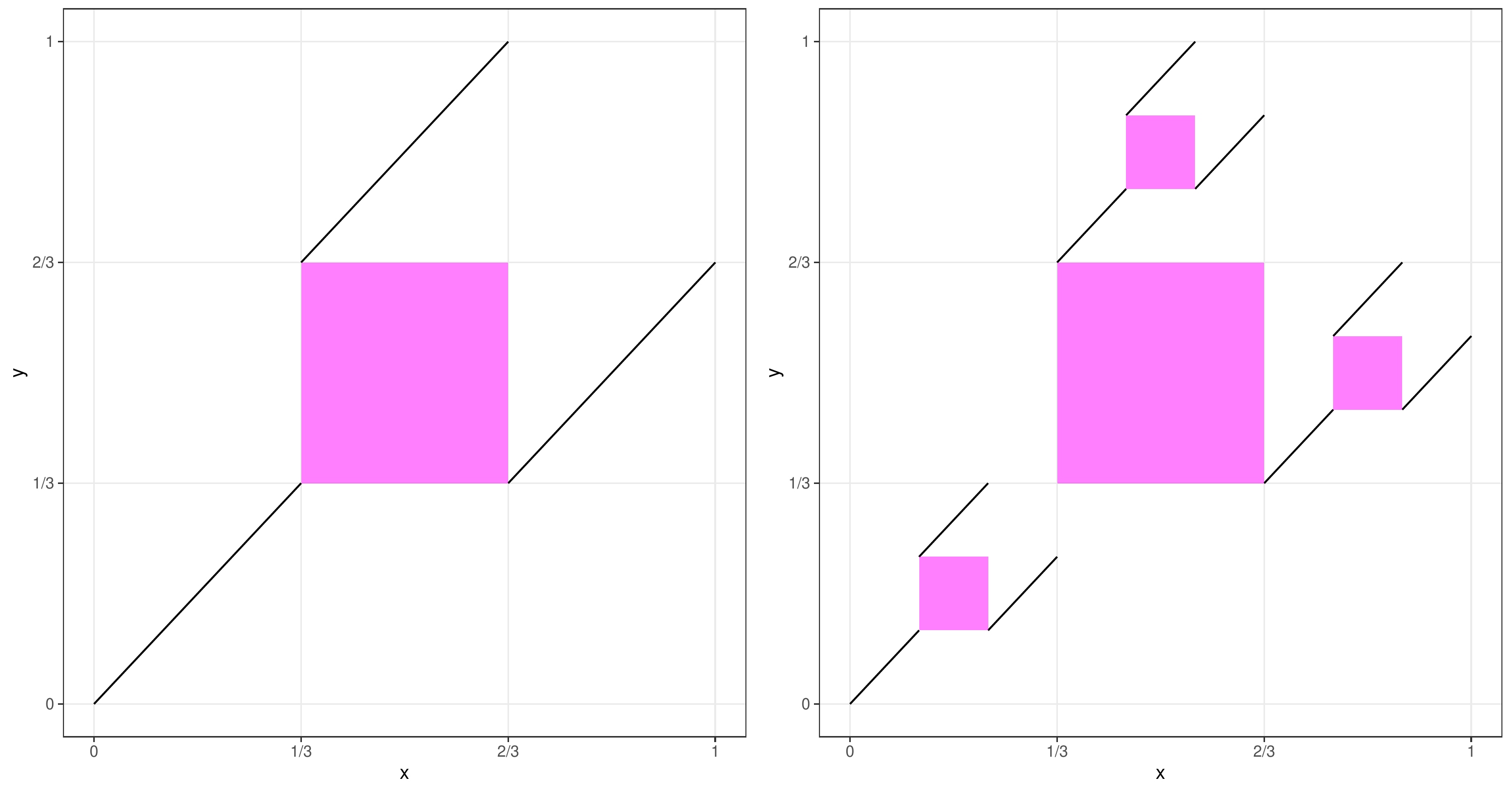}
    \caption{The set $\Omega_{\sqrt{2}}$ (in magenta) for an even shuffles of three strips 
    (left panel) and nine strips (right panel).
     In this case we have $\lambda_2(\Omega_{\sqrt{2}})=\frac{1}{9}$ and $\ell(A_h)=1-(2-\sqrt{2})\frac{1}{9}$
     for the first shuffle and $\lambda_2(\Omega_{\sqrt{2}})=\frac{1}{9} + \frac{1}{27}$ as well as $\ell(A_h)=1-(2-\sqrt{2})\left(\frac{1}{9} + \frac{1}{27} \right)$ for the second one.}
    \label{fig:Omega}
\end{center}
\end{figure}

\section{The interrelations}\label{sec:inter}
We now derive a simple formula linking Kendall's $\tau$ and the length measure for mutually completely dependent copula
and start with some preliminary observations. Working with checkerboard copulas, using integration by 
parts (see \cite{Nel}) and finally applying an approximation result like
\cite[Theorem 3.2]{KFT} yields that for arbitrary bivariate copulas $A,B \in \kc$
the following identity holds:
\begin{equation}\label{eq:tau.kern}
\tau(A)=4\int_{[0,1]^2} A d\mu_A -1 = 4 \left(\frac{1}{2} - \int_{[0,1]^2} K_A(x,[0,y]) K_{A^t}(y,[0,x]) d\lambda_2(x,y)\right)-1.
\end{equation}
For $A_h \in \kc_{mcd}$ eq. (\ref{eq:tau.kern}) can be derived in the following simple alternative way, 
which we include for the sake of completeness: 
Using the fact that for $A_h \in \kc_{mcd}$ and every $x \in [0,1]$ we have
\begin{eqnarray*}
A(x,h(x)) &=& \int_{[0,x]} K_{A_h}(t,[0,h(x)]) d\lambda(t) = \int_{[0,x]} K_{A_h}(t,[0,h(x))) d\lambda(t)\\
&=& \int_{[0,x]} \mathbf{1}_{[0,h(x))}(h(t)) d\lambda(t) = \int_{[0,x]} (1-\mathbf{1}_{[h(x),1]}(h(t)) d\lambda(t) \\
&=& x - \int_{[0,1]} \mathbf{1}_{[0,x]}(t) \mathbf{1}_{[h(x),1]}(h(t)) d\lambda(t).
\end{eqnarray*}
Using disintegration and change of coordinates directly yields
\begin{eqnarray*}
\int_{[0,1]^2} A d\mu_A &=& \int_{[0,1]} A(x,h(x)) d\lambda(x) = 
\frac{1}{2}-\int_{[0,1]} \int_{[0,1]} \mathbf{1}_{[0,x]}(t) \mathbf{1}_{[h(x),1]}(h(t)) d\lambda(t) d\lambda(x) 
\end{eqnarray*}
and hence proves eq. (\ref{eq:tau.kern}). 
The latter identity, however, boils down to an affine transformation of $\lambda_2(\Omega_{\sqrt{2}})$ by considering
\begin{eqnarray*}
\int_{[0,1]^2} A d\mu_A &=& \frac{1}{2} - \int_{[0,1]} \int_{[0,1]} \mathbf{1}_{[0,x]}(h^{-1} \circ h(t)) \mathbf{1}_{[h(x),1]}(h(t)) d\lambda(t) d\lambda(x) \\
&=& \frac{1}{2} -  \int_{[0,1]} \int_{[0,1]}  \mathbf{1}_{[0,x]}(h^{-1}(y)) \mathbf{1}_{[h(x),1]}(y) d\lambda(y) d\lambda(x) \\
&=& \frac{1}{2} - \lambda_2(\Omega_{\sqrt{2}}). 
\end{eqnarray*}
Having this, the identity
\begin{equation}\label{eq:tauAh}
\tau(A_h) = 4 \left(\frac{1}{2} - \lambda_2(\Omega_{\sqrt{2}}) \right) - 1 = 1- 4 \lambda_2(\Omega_{\sqrt{2}}).
\end{equation}
follows immediately. Notice that eq. (\ref{eq:tauAh}) implies that the area of $\Omega_{\sqrt{2}}$ coincides with the 
quantity $\textrm{inv(h)}$ as studied in \cite[Lemma 3.1]{SPT}. 
Comparing eq. (\ref{eq:lm}) and eq. (\ref{eq:tauAh}) shows the existence of an affine transformation 
$a:[-1,1]\rightarrow [\frac{1}{\sqrt{2}},1]$ such that 
$$
a\left(\tau(A_h)\right) = \ell(A_h)
$$
holds for every $A_h \in \mathcal{C}_{mcd}$ - in other words, we have proved the subsequent result:
\begin{thm}\label{thm:main1}
For every $A_h \in \kc_{mcd}$ the following identity linking the length measure $\ell$ and Kendall's $\tau$ holds:
\begin{equation}
\ell(A_h)= 1 - \frac{2-\sqrt{2}}{4} (1-\tau(A_h))
\end{equation}
\end{thm}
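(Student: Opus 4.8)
The plan is essentially to read off the claimed identity by combining the two formulas the excerpt has already established, so the ``proof'' is really just an algebraic elimination of $\lambda_2(\Omega_{\sqrt{2}})$. First I would recall equation~(\ref{eq:lm}), namely $\ell(A_h) = 1-(2-\sqrt{2})\,\lambda_2(\Omega_{\sqrt{2}})$, which the excerpt derives from the co-area formula together with the explicit description of the set where $\Vert\nabla A_h\Vert_2 = \sqrt{2}$. Second, I would recall equation~(\ref{eq:tauAh}), $\tau(A_h) = 1 - 4\,\lambda_2(\Omega_{\sqrt{2}})$, which follows from the kernel representation~(\ref{eq:tau.kern}) of Kendall's $\tau$ specialized to mutually completely dependent copulas and the identification of the double integral with $\lambda_2(\Omega_{\sqrt{2}})$.

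The key step is then to solve the second identity for $\lambda_2(\Omega_{\sqrt{2}})$, obtaining $\lambda_2(\Omega_{\sqrt{2}}) = \tfrac{1}{4}(1-\tau(A_h))$, and substitute this into the first. This yields
\begin{equation*}
\ell(A_h) = 1 - (2-\sqrt{2})\cdot\frac{1-\tau(A_h)}{4} = 1 - \frac{2-\sqrt{2}}{4}\,(1-\tau(A_h)),
\end{equation*}
which is exactly the claimed formula. For completeness I would also note that as $\tau(A_h)$ ranges over $[-1,1]$ the right-hand side ranges over $[1-\tfrac{2-\sqrt{2}}{2},1] = [\tfrac{1}{\sqrt{2}},1]$, which is consistent with the a priori bounds in~(\ref{eq:ineq}); this makes the affine map $a$ of the preceding discussion explicit, namely $a(s) = 1 - \tfrac{2-\sqrt{2}}{4}(1-s)$.

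I do not anticipate a genuine obstacle here: both ingredients~(\ref{eq:lm}) and~(\ref{eq:tauAh}) are already available in the excerpt, and the remaining work is a one-line substitution. If anything, the only point deserving a word of care is making sure that the \emph{same} set $\Omega_{\sqrt{2}}^{A_h}$ appears in both formulas --- i.e.\ that the co-area computation and the $\tau$-computation refer to the identical geometric object $\{(u,v)\in(0,1)^2 : h(u)\le v,\ h^{-1}(v)\le u\}$ --- which the excerpt has arranged by design. Hence the theorem follows directly, and no approximation or density argument is needed for this particular statement (those enter only when one wants to transfer continuity properties to limits of copulas in $\kc_{mcd}$).
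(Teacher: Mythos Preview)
Your proposal is correct and follows exactly the paper's own route: the paper likewise just compares eq.~(\ref{eq:lm}) and eq.~(\ref{eq:tauAh}) to eliminate $\lambda_2(\Omega_{\sqrt{2}})$ and read off the affine relation. Your added sanity check on the range $[\tfrac{1}{\sqrt{2}},1]$ is a nice touch but not needed for the argument.
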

\noindent Theorem \ref{thm:main1} provides an answer to the question posed in \cite{CGST}, `whether there are links between the 
length of level curves and concordance measures' - even the conjectured `weighting' mentioned in \cite{CGST}
is not necessary, in the class $\kc_{mcd}$ all we need is a fixed affine transformation. 

\noindent In \cite{CGST} it was further shown that the length measure interpreted as function $\ell: \mathcal{C} \rightarrow [\frac{\sqrt{2}}{2},1]$ 
is not continuous w.r.t. $d_\infty$. The previous result implies, however, that within the dense subclass $\kc_{mcd}$
the length measure is indeed continuous:
\begin{cor}
The mapping $\ell: \mathcal{C}_{mcd} \rightarrow [\frac{1}{\sqrt{2}},1]$ is continuous with respect to  $d_\infty$. 
\end{cor}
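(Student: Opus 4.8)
The plan is to deduce continuity directly from Theorem~\ref{thm:main1} together with the known continuity properties of Kendall's $\tau$. Recall that on $\kc_{mcd}$ the identity $\ell(A_h) = 1 - \frac{2-\sqrt{2}}{4}(1-\tau(A_h))$ holds, so $\ell$ restricted to $\kc_{mcd}$ is the composition of $\tau$ with a fixed affine (hence continuous) map $a:[-1,1] \rightarrow [\frac{1}{\sqrt{2}},1]$. Thus it suffices to show that $\tau$, restricted to $\kc_{mcd}$, is continuous with respect to $d_\infty$. The natural route is to invoke a known continuity statement for $\tau$ on a suitable subclass of $\kc$ and note that $\kc_{mcd}$ sits inside it.

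First I would recall that Kendall's $\tau$ fails to be continuous with respect to $d_\infty$ on all of $\kc$ (the same pathology that affects $\ell$), so one cannot simply quote global continuity of $\tau$. Instead, I would use the representation $\tau(A) = 4\int_{[0,1]^2} A \, d\mu_A - 1$ and exploit that for mutually completely dependent copulas the measure $\mu_{A_h}$ is concentrated on the graph of $h$, so $\tau(A_h) = 4\int_{[0,1]} A_h(x,h(x))\,d\lambda(x) - 1$ as already derived in Section~\ref{sec:inter}. Equivalently, via eq.~(\ref{eq:tauAh}), $\tau(A_h) = 1 - 4\,\lambda_2(\Omega_{\sqrt{2}}^{A_h})$, so continuity of $\ell$ on $\kc_{mcd}$ is equivalent to continuity of the map $A_h \mapsto \lambda_2(\Omega_{\sqrt{2}}^{A_h})$. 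The cleanest argument I would attempt is to show that if $A_{h_n} \to A_h$ uniformly with all copulas in $\kc_{mcd}$, then $\int_{[0,1]^2} A_{h_n}\,d\mu_{A_{h_n}} \to \int_{[0,1]^2} A_h\,d\mu_{A_h}$: uniform convergence $A_{h_n} \to A_h$ gives convergence of the integrands uniformly, and weak convergence $\mu_{A_{h_n}} \to \mu_{A_h}$ (which follows from $d_\infty$-convergence of copulas, since $(X_n,Y_n) \sim A_{h_n}$ converges in distribution to $(X,Y) \sim A_h$) lets one pass to the limit in the integral; combining a uniformly convergent sequence of continuous functions integrated against weakly convergent probability measures yields convergence of the integrals by a standard triangle-inequality estimate.

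The main obstacle is the potential discontinuity of $\tau$: the map $A \mapsto \int A\,d\mu_A$ is \emph{not} continuous on all of $\kc$ precisely because the diagonal product structure $A \mapsto (A,\mu_A)$ can jump when the limiting copula has a different singular/absolutely continuous decomposition than the approximants. The key point that makes the restriction to $\kc_{mcd}$ work is that \emph{every} copula in $\kc_{mcd}$ — including the limit — is singular and concentrated on a graph, so no such jump between structurally different regimes can occur; concretely, the estimate $\bigl|\int A_{h_n}\,d\mu_{A_{h_n}} - \int A_h\,d\mu_{A_h}\bigr| \leq \norm{A_{h_n}-A_h}_\infty + \bigl|\int A_h\,d\mu_{A_{h_n}} - \int A_h\,d\mu_{A_h}\bigr|$ has first term tending to $0$ by hypothesis and second term tending to $0$ by weak convergence of $\mu_{A_{h_n}}$ to $\mu_{A_h}$ against the fixed bounded continuous function $A_h$. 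Hence $\tau|_{\kc_{mcd}}$ is $d_\infty$-continuous, and composing with the affine map $a$ from Theorem~\ref{thm:main1} gives $d_\infty$-continuity of $\ell|_{\kc_{mcd}}$, as claimed. I would close by remarking that this is in sharp contrast to the behaviour of $\ell$ on all of $\kc$, where the discontinuity established in \cite{CGST} persists.
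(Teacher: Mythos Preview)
Your overall strategy---compose the affine identity of Theorem~\ref{thm:main1} with continuity of $\tau$---is exactly the paper's one-line proof. But you build it on a false premise: you assert that ``Kendall's $\tau$ fails to be continuous with respect to $d_\infty$ on all of $\kc$,'' and this is simply wrong. Continuity under pointwise (equivalently uniform) convergence of copulas is one of Scarsini's axioms for a concordance measure (see \cite{Nel,DS}), and $\tau$ satisfies it on all of $\kc$. The paper accordingly just quotes this fact and applies Theorem~\ref{thm:main1}.

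Your own triangle-inequality estimate
\[
\left| \int A_{h_n}\,d\mu_{A_{h_n}} - \int A_h\,d\mu_{A_h} \right|
\leq \norm{A_{h_n}-A_h}_\infty
+ \left| \int A_h\,d\mu_{A_{h_n}} - \int A_h\,d\mu_{A_h} \right|
\]
already proves the global statement: it uses only uniform convergence $A_n \to A$, the induced weak convergence $\mu_{A_n}\to\mu_A$ (valid for any $d_\infty$-convergent sequence in $\kc$), and continuity and boundedness of the fixed integrand $A_h$---none of which requires membership in $\kc_{mcd}$. So the ``structural'' explanation you give (singular limits, graphs, etc.) is spurious; the restriction to $\kc_{mcd}$ is needed only so that the affine identity from Theorem~\ref{thm:main1} is available, not for the continuity of $\tau$. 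Drop the incorrect claim and the detour, and your proof collapses to the paper's.
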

\begin{proof}
Suppose that $A_h,A_{h_1},A_{h_2},\ldots$ are mutually completely dependent copulas and that 
the sequence $(A_{h_n})_{n \in \mathbb{N}}$ converges to $A_h$ pointwise. Being a concordance measure Kendall's $\tau$
is continuous with respect to  $d_\infty$, so we have $\lim_{n \rightarrow \infty} \tau(A_{h_n})=\tau(A_h)$ and eq. (\ref{eq:tauAh})
directly yields $\lim_{n \rightarrow \infty} \ell(A_{h_n})=\ell(A_h)$. 
\end{proof}

\begin{cor}
For every $z \in [\frac{1}{\sqrt{2}},1]$ there exists some mutually completely dependent 
copula $A_h$ with $\ell(A_h)=z$. In other words,
all values in $[\frac{1}{\sqrt{2}},1]$ are attained by $\ell$.  
\end{cor}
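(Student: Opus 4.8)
The plan is to reduce the statement, via Theorem~\ref{thm:main1}, to the claim that Kendall's $\tau$ restricted to $\kc_{mcd}$ attains every value in $[-1,1]$, and then to produce an explicit one–parameter family of mutually completely dependent copulas realizing all these values. Indeed, Theorem~\ref{thm:main1} tells us that the image $\ell(\kc_{mcd})$ equals the image of the affine map $s \mapsto 1-\frac{2-\sqrt{2}}{4}(1-s)$ applied to $\{\tau(A_h): A_h \in \kc_{mcd}\}$, and this affine map sends $[-1,1]$ onto $[\frac{1}{\sqrt{2}},1]$; so it suffices to show that $\tau$ (equivalently $\lambda_2(\Omega_{\sqrt{2}})$, by eq.~(\ref{eq:tauAh})) sweeps the full interval. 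Since $A_M$ and $A_W$ are mutually completely dependent with $\tau=1$ and $\tau=-1$, the task is to connect them within $\kc_{mcd}$ in a way that lets $\tau$ vary continuously.

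For $t\in[0,1]$ I would take the $\lambda$-preserving involution $h_t\colon[0,1]\to[0,1]$ defined by $h_t(x)=t-x$ for $x\in[0,t]$ and $h_t(x)=x$ for $x\in(t,1]$; it is clearly bijective, hence $A_{h_t}\in\kc_{mcd}$, with $A_{h_0}=M$ and $A_{h_1}=W$. First I would compute $\lambda_2(\Omega_{\sqrt{2}}^{A_{h_t}})$ directly from the description $\Omega_{\sqrt{2}}^{A_{h_t}}=\{(x,y)\in(0,1)^2: h_t(x)\le y,\ h_t^{-1}(y)\le x\}$: splitting $(0,1)^2$ into the four cells according to whether each coordinate lies in $[0,t]$ or $[t,1]$, one sees that the only cell of positive measure is $\{(x,y)\in(0,t)^2: x+y\ge t\}$, a right triangle of area $t^2/2$, while on the other three cells the inequalities collapse to $x=y$ (measure zero). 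Hence $\lambda_2(\Omega_{\sqrt{2}}^{A_{h_t}})=t^2/2$, and eq.~(\ref{eq:lm}) gives
\begin{equation*}
\ell(A_{h_t}) = 1 - \frac{2-\sqrt{2}}{2}\, t^2, \qquad t\in[0,1].
\end{equation*}

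To finish, I would observe that the right–hand side is a continuous, strictly decreasing function of $t$ on $[0,1]$ taking the value $\ell(M)=1$ at $t=0$ and $\ell(W)=\frac{1}{\sqrt{2}}$ at $t=1$, hence maps $[0,1]$ bijectively onto $[\frac{1}{\sqrt{2}},1]$; concretely, for given $z\in[\frac{1}{\sqrt{2}},1]$ the copula $A_{h_t}$ with $t=\sqrt{\tfrac{2(1-z)}{2-\sqrt{2}}}$ satisfies $\ell(A_{h_t})=z$. There is essentially no real obstacle here: the only points needing (minor) care are checking that each $h_t$ is genuinely a $\lambda$-preserving bijection, so that $A_{h_t}$ really lies in $\kc_{mcd}$, and the bookkeeping in the four-cell case split for $\Omega_{\sqrt{2}}^{A_{h_t}}$. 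As an alternative to the explicit area computation one could argue purely qualitatively: $t\mapsto A_{h_t}$ is readily seen to be $d_\infty$-continuous, $\tau$ is continuous as a concordance measure with $\tau(A_{h_0})=1$ and $\tau(A_{h_1})=-1$, so by the intermediate value theorem $\tau$ attains all of $[-1,1]$ on this family, and Theorem~\ref{thm:main1} then yields the full interval $[\frac{1}{\sqrt{2}},1]$ for $\ell$.
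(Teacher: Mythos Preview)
Your argument is correct, and it proceeds along a different (and more elementary) route than the paper. The paper's proof imports from \cite{SPT} the fact that every value of Kendall's $\tau$ in $[-1,1]$ is already attained within $\kc_{mcd}$, and then simply applies the affine link of Theorem~\ref{thm:main1}. You instead exhibit an explicit one–parameter family $(A_{h_t})_{t\in[0,1]}$ of mutually completely dependent copulas interpolating between $M$ and $W$, compute $\lambda_2(\Omega_{\sqrt{2}}^{A_{h_t}})=t^2/2$ directly, and conclude via the intermediate value theorem. The advantage of your approach is that it is entirely self-contained and constructive, even giving a closed-form preimage $t=\sqrt{2(1-z)/(2-\sqrt{2})}$ for each $z$; the paper's approach is shorter on the page but shifts the work to a nontrivial external reference. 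One minor imprecision worth tightening: in your four-cell split, the conditions do not literally ``collapse to $x=y$'' on all three remaining cells --- on the two mixed cells $(0,t]\times(t,1]$ and $(t,1]\times(0,t]$ the system is actually infeasible (empty set), and only on $(t,1]^2$ do the inequalities force $x=y$ --- but the conclusion (measure zero in each case) is of course unaffected.
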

\begin{proof}
According to \cite{SPT} for each $(x,y)$ in the region determined by Kendall' $\tau$ and Spearman's $\rho$ there exists 
some mutually completely dependent copula $C_h$ fulfilling 
$$
(\tau(A_h),\rho(A_h))=(x,y).
$$ 
Having this, the result directly follows via eq. (\ref{eq:tauAh}).
\end{proof}

Moving away from the length measure we now turn to the surface area of copulas, derive analogous statements and 
start with showing yet another simple formula for elements in $\kc_{mcd}$. 
Considering that copulas are Lipschitz continuous, the 
surface area $\textrm{surf}(A)$ of an arbitrary copula $A$ is given by 
\begin{eqnarray}\label{eq:surface.gen}
\textrm{surf}(A) &=& \int_{[0,1]^2} \sqrt{\left(\frac{\partial A}{ \partial x}(x,y)\right)^2 + 
\left(\frac{\partial A}{ \partial y}(x,y)\right)^2 + 1} \, \, d\lambda_2(x,y) \nonumber \\
&=& \int_{[0,1]^2} \sqrt{K_A(x,[0,y])^2 + K_{A^t}(y,[0,x])^2 + 1} \, \, d\lambda_2(x,y). 
\end{eqnarray}
Again working with mutually completely dependent copulas yields the following result:
\begin{lem}
For every $A_h \in \kc_{mcd}$ the surface area of $A_h$ is given by
\begin{equation}\label{eq:surface}
\textrm{surf}(A_h) = \sqrt{2} - \left(2 \sqrt{2} - 1 - \sqrt{3}\right) \lambda_2(\Omega_{\sqrt{2}}).
\end{equation}
\end{lem}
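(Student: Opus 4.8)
The plan is to evaluate the integral in (\ref{eq:surface.gen}) directly, exploiting that for $A_h \in \kc_{mcd}$ both conditional distribution functions appearing in the integrand are $\{0,1\}$-valued. A Markov kernel of $A_h$ is $K_{A_h}(x,[0,y]) = \mathbf{1}_{[0,y]}(h(x)) = \mathbf{1}_{\{h(x)\le y\}}$, and since $A_h^t = A_{h^{-1}}$ with $h^{-1}$ again $\lambda$-preserving (bijectivity of $h$ together with $\lambda^h=\lambda$ gives $\lambda^{h^{-1}}=\lambda$), a Markov kernel of $A_h^t$ is $K_{A_h^t}(y,[0,x]) = \mathbf{1}_{[0,x]}(h^{-1}(y)) = \mathbf{1}_{\{h^{-1}(y)\le x\}}$. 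Consequently the integrand $\sqrt{K_{A_h}(x,[0,y])^2 + K_{A_h^t}(y,[0,x])^2 + 1}$ takes only the values $1$, $\sqrt2$ or $\sqrt3$, according to whether none, exactly one, or both of the conditions $h(x)\le y$ and $h^{-1}(y)\le x$ are satisfied, so the surface area becomes a finite linear combination of the $\lambda_2$-measures of the corresponding cells.

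Concretely, I would partition $[0,1]^2$ into $S_{11} := \Omega_{\sqrt2} = \{(x,y): h(x)\le y,\ h^{-1}(y)\le x\}$, the cell $S_{10}$ where only the first inequality holds, the cell $S_{01}$ where only the second holds, and the cell $S_{00}$ where neither holds. Using Fubini's theorem and $\int_0^1 h\,d\lambda = \int_0^1 h^{-1}\,d\lambda = \tfrac12$ (both integrals equal $\int_0^1 t\,d\lambda(t)$ by the respective $\lambda$-preservation) one obtains
\begin{align*}
\lambda_2\big(\{(x,y): h(x)\le y\}\big) &= \int_0^1\big(1-h(x)\big)\,d\lambda(x) = \tfrac12,\\
\lambda_2\big(\{(x,y): h^{-1}(y)\le x\}\big) &= \int_0^1\big(1-h^{-1}(y)\big)\,d\lambda(y) = \tfrac12 .
\end{align*}
Since the first set is the disjoint union $S_{11}\cup S_{10}$ and the second is $S_{11}\cup S_{01}$ (up to $\lambda_2$-null sets), this yields $\lambda_2(S_{10}) = \lambda_2(S_{01}) = \tfrac12 - \lambda_2(\Omega_{\sqrt2})$, and subtracting all three from $1$ gives $\lambda_2(S_{00}) = \lambda_2(\Omega_{\sqrt2})$ as well.

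Plugging the four measures into the split integral then gives
$$
\textrm{surf}(A_h) = \lambda_2(\Omega_{\sqrt2}) + \sqrt2\big(1 - 2\lambda_2(\Omega_{\sqrt2})\big) + \sqrt3\,\lambda_2(\Omega_{\sqrt2}) = \sqrt2 - \big(2\sqrt2 - 1 - \sqrt3\big)\,\lambda_2(\Omega_{\sqrt2}),
$$
which is precisely (\ref{eq:surface}). The only point requiring a little care is the almost-everywhere bookkeeping: the ``diagonal'' set $\{(x,y): h(x) = y\}$ --- which coincides with $\{(x,y): h^{-1}(y)=x\}$ and separates the cells --- is the graph of a measurable function and hence $\lambda_2$-null, so replacing $[0,y]$ by $[0,y)$ in the kernel, or reassigning boundary points between adjacent cells, affects none of the integrals. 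Beyond this I expect no genuine obstacle; the remaining steps are a direct computation entirely analogous to the derivations of (\ref{eq:lm}) and (\ref{eq:tauAh}).
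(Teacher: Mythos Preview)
Your argument is correct and follows the same overall strategy as the paper: partition $[0,1]^2$ into the level sets of the $\{1,\sqrt2,\sqrt3\}$-valued integrand, compute their $\lambda_2$-measures, and sum. The one substantive difference is how you obtain $\lambda_2(S_{00})=\lambda_2(\Omega_{\sqrt2})$. You derive it via Fubini and the identity $\int_0^1 h\,d\lambda=\int_0^1 h^{-1}\,d\lambda=\tfrac12$, which in fact gives you each of $\lambda_2(S_{10})$ and $\lambda_2(S_{01})$ separately. The paper instead observes that the map $\Psi_h(x,y)=(h^{-1}(y),h(x))$ is $\lambda_2$-preserving and sends $\Omega_{\sqrt2}$ onto $S_{00}$, yielding the equality of measures in one stroke. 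Your route is slightly more computational but yields finer information; the paper's is a one-line symmetry argument. Either way the remaining bookkeeping (and your remark on the $\lambda_2$-null graph of $h$) is identical.
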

\begin{proof}
For the case of a completely dependent copula $A_h$ eq. (\ref{eq:surface.gen}) obviously simplifies to 
\begin{eqnarray*}
\textrm{surf}(A_h) &=& \int_{[0,1]^2} \sqrt{\mathbf{1}^2_{[0,y]}(h(x))  + \mathbf{1}^2_{[0,x]}(h^{-1}(y)) + 1} \, \, d\lambda_2(x,y) \\
&=& \int_{[0,1]^2} \sqrt{\mathbf{1}_{[0,y]}(h(x))  + \mathbf{1}_{[0,x]}(h^{-1}(y)) + 1} \, \, d\lambda_2(x,y).
\end{eqnarray*}
Considering that the latter integrand is a step function only attaining the values $1,\sqrt{2}$ and $\sqrt{3}$, 
defining  
\begin{eqnarray*}
\Omega_{0}^{A_h}:=\Omega_{0} &=& \left\{(x,y) \in (0,1)^2: h(x) > y, \, h^{-1}(y)>x \right\}
\end{eqnarray*}
as well as ($\Omega_{\sqrt{2}}$ as before)
\begin{eqnarray*}
\Omega_{1}^{A_h}:=\Omega_{1} &=& [0,1]^2 \setminus (\Omega_{\sqrt{2}} \cup \Omega_{0}) 
\end{eqnarray*}
we therefore have
\begin{eqnarray*}
\textrm{surf}(A_h) &=& \lambda_2(\Omega_0) + \sqrt{2} \, \lambda_2(\Omega_1) + \sqrt{3} \,\lambda_2(\Omega_{\sqrt{2}}).
\end{eqnarray*}
The latter identity can be further simplified: The measurable bijection $\Psi_h: [0,1]^2 \rightarrow [0,1]^2$, 
defined by $\Psi_h(x,y)=(h^{-1}(y),h(x))$ obviously fulfills $\lambda_2^{\Psi_h}=\lambda_2$. Therefore using the fact that
\begin{eqnarray*}
\Psi_h^{-1}(\Omega_0) &=& \{(x,y) \in [0,1]^2: (h^{-1}(y),h(x)) \in \Omega_0\} \\
 &=& \{(x,y) \in [0,1]^2: h(x) \leq y, h^{-1}(y) \leq x\} = \Omega_{\sqrt{2}} 
\end{eqnarray*}
it follows that $\lambda_2(\Omega_0)=\lambda_2(\Omega_{\sqrt{2}})$ holds. This altogether yields 
\begin{eqnarray*}
\textrm{surf}(A_h) &=& \lambda_2(\Omega_{\sqrt{2}}) + \sqrt{2} \, \lambda_2(\Omega_1) + \sqrt{3} \,\lambda_2(\Omega_{\sqrt{2}}) \\
&=& (1+\sqrt{3}) \, \lambda_2(\Omega_{\sqrt{2}}) + \sqrt{2}\left(1- 2 \lambda_2(\Omega_{\sqrt{2}}) \right) \\
&=& \sqrt{2} + \underbrace{\left( 1+\sqrt{3} - 2 \sqrt{2} \right)}_{<0}\, \lambda_2(\Omega_{\sqrt{2}}),
\end{eqnarray*}
which completes the proof.
\end{proof}
\begin{thm}
For every $A_h \in \kc_{mcd}$ the following identity linking the surface area and Kendall's $\tau$ holds:
\begin{equation}\label{eq:surf.Ah}
\textrm{surf}(A_h)= \sqrt{2} - \frac{2\sqrt{2} - 1 - \sqrt{3}}{4} (1-\tau(A_h))
\end{equation}
\end{thm}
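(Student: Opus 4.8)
The plan is to eliminate the common parameter $\lambda_2(\Omega_{\sqrt{2}})$ between the two closed-form expressions already at our disposal. The preceding Lemma (eq. (\ref{eq:surface})) writes $\textrm{surf}(A_h)$ as an affine function of $\lambda_2(\Omega_{\sqrt{2}})$, while eq. (\ref{eq:tauAh}) writes $\tau(A_h)$ as an invertible affine function of the very same quantity. Solving one relation for $\lambda_2(\Omega_{\sqrt{2}})$ and substituting into the other therefore yields at once an affine identity relating $\textrm{surf}(A_h)$ and $\tau(A_h)$.

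Concretely, I would first read off from eq. (\ref{eq:tauAh}) that
$$\lambda_2(\Omega_{\sqrt{2}}) = \frac{1-\tau(A_h)}{4},$$
and then insert this into eq. (\ref{eq:surface}) to obtain
$$\textrm{surf}(A_h) = \sqrt{2} - \bigl(2\sqrt{2}-1-\sqrt{3}\bigr)\,\frac{1-\tau(A_h)}{4} = \sqrt{2} - \frac{2\sqrt{2}-1-\sqrt{3}}{4}\,(1-\tau(A_h)),$$
which is precisely the asserted identity. Since $2\sqrt{2}-1-\sqrt{3}>0$, the coefficient of $1-\tau(A_h)$ is positive, so on $\kc_{mcd}$ the surface area is a strictly decreasing affine function of Kendall's $\tau$.

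The only genuinely new input here is the Lemma; the present statement is a one-line algebraic corollary of it together with eq. (\ref{eq:tauAh}), so I do not expect any real obstacle. It is, however, worth recording three immediate consequences of the display above, which justify remarks made informally in the Introduction. First, combining it with Theorem \ref{thm:main1} (which makes $1-\tau(A_h)$ proportional to $1-\ell(A_h)$) shows that on $\kc_{mcd}$ the surface area is an affine transformation of the length measure as well. Second, as $A_h$ ranges over $\kc_{mcd}$ the quantity $\lambda_2(\Omega_{\sqrt{2}})$ attains every value in $[0,\tfrac12]$ (the value $0$ being taken at $M$ and $\tfrac12$ at $W$), hence $\textrm{surf}$ restricted to $\kc_{mcd}$ attains exactly the values in $[\tfrac{1+\sqrt{3}}{2},\sqrt{2}]$. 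Third, continuity of Kendall's $\tau$ with respect to $d_\infty$ together with the identity forces $\textrm{surf}\colon\kc_{mcd}\to[\tfrac{1+\sqrt{3}}{2},\sqrt{2}]$ to be continuous with respect to $d_\infty$, mirroring the earlier corollary for $\ell$.
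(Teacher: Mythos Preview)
Your proof is correct and is exactly the argument the paper intends: the theorem is stated immediately after the Lemma with no separate proof, since it follows by substituting $\lambda_2(\Omega_{\sqrt{2}})=\tfrac{1-\tau(A_h)}{4}$ from eq.~(\ref{eq:tauAh}) into eq.~(\ref{eq:surface}). One small slip in your closing remarks: since $\textrm{surf}(A_h)=\sqrt{2}-c(1-\tau(A_h))=(\sqrt{2}-c)+c\,\tau(A_h)$ with $c=\tfrac{2\sqrt{2}-1-\sqrt{3}}{4}>0$, the surface area is a strictly \emph{increasing} (not decreasing) affine function of $\tau$ on $\kc_{mcd}$.
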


\noindent As in the case of the length measure we have the following two immediate corollaries:
\begin{cor}
The mapping $\textrm{surf}: \mathcal{C}_{mcd} \rightarrow [\frac{1+\sqrt{3}}{2},\sqrt{2}]$ is continuous with respect to  $d_\infty$. 
\end{cor}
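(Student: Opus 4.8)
The plan is to mimic, \emph{mutatis mutandis}, the proof of the analogous corollary for the length measure: everything has already been reduced to eq.~(\ref{eq:surf.Ah}), so only a soft continuity argument remains. Concretely, suppose $A_h, A_{h_1}, A_{h_2}, \dots$ are mutually completely dependent copulas with $A_{h_n} \to A_h$ with respect to $d_\infty$ (equivalently, pointwise, since on the space of copulas the two convergence notions coincide). Since Kendall's $\tau$ is a genuine concordance measure it is continuous on $(\kc, d_\infty)$, hence in particular on the subclass $\kc_{mcd}$, so $\lim_{n\to\infty} \tau(A_{h_n}) = \tau(A_h)$. By eq.~(\ref{eq:surf.Ah}) the functional $\textrm{surf}$ restricted to $\kc_{mcd}$ is the composition of $\tau$ with the fixed affine map $t \mapsto \sqrt{2} - \tfrac{2\sqrt{2} - 1 - \sqrt{3}}{4}(1-t)$, which is continuous; composing continuous maps yields $\lim_{n\to\infty} \textrm{surf}(A_{h_n}) = \textrm{surf}(A_h)$, i.e.\ the asserted continuity.

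The only remaining point is to confirm that the codomain is indeed the interval $[\tfrac{1+\sqrt{3}}{2}, \sqrt{2}]$. Because $2\sqrt{2} - 1 - \sqrt{3} > 0$ the affine map above is strictly increasing, so on the range $[-1,1]$ of $\tau$ its values lie between the images of the endpoints: $\tau = 1$ gives $\textrm{surf} = \sqrt{2}$ (consistent with $\lambda_2(\Omega_{\sqrt{2}}) = 0$ for $A_h = M$), and $\tau = -1$ gives $\textrm{surf} = \sqrt{2} - \tfrac{2\sqrt{2} - 1 - \sqrt{3}}{2} = \tfrac{1+\sqrt{3}}{2}$ (consistent with $\lambda_2(\Omega_{\sqrt{2}}) = \tfrac{1}{2}$ for $A_h = W$). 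One can equivalently read this off from the preceding Lemma together with the elementary bound $\lambda_2(\Omega_{\sqrt{2}}) \in [0, \tfrac{1}{2}]$, which follows from $\lambda_2(\Omega_0) = \lambda_2(\Omega_{\sqrt{2}})$, established in that proof, and from $\Omega_0, \Omega_{\sqrt{2}}$ being disjoint subsets of $[0,1]^2$ (or directly from eq.~(\ref{eq:tauAh}) and $\tau(A_h) \in [-1,1]$).

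I do not expect any genuine obstacle here: the entire analytic content sits in eq.~(\ref{eq:surf.Ah}) --- which itself rests on the Lemma and on eq.~(\ref{eq:tauAh}) --- and what is left is precisely the same ``composition of a continuous functional with a continuous affine map'' reasoning that already settled the corresponding statement for $\ell$. If anything deserves a word of care, it is only the (immediate) remark that continuity of $\tau$ on $(\kc, d_\infty)$ restricts to continuity of $\tau$ on $\kc_{mcd}$, so that the argument stays within the mutually completely dependent class as the statement requires.
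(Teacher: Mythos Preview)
Your proof is correct and follows exactly the approach the paper intends: the paper states this corollary as immediate (``As in the case of the length measure \ldots'') and your argument---continuity of $\tau$ on $(\kc,d_\infty)$ combined with the affine identity eq.~(\ref{eq:surf.Ah})---is precisely the analogue of the proof given for the length-measure corollary. Your additional verification of the codomain $[\tfrac{1+\sqrt{3}}{2},\sqrt{2}]$ is a welcome detail that the paper leaves implicit.
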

\begin{cor}
For every $z \in [\frac{1+\sqrt{3}}{2},\sqrt{2}]$ there exists some mutually completely copula 
$A_h$ with $\textrm{surf}(A_h)=z$. In other words, all values in $[\frac{1+\sqrt{3}}{2},\sqrt{2}]$ are attained by 
$\textrm{surf}$.  
\end{cor}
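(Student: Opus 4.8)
The plan is to reduce this attainment statement to the surjectivity of Kendall's $\tau$ on $\kc_{mcd}$ together with the affine identity (\ref{eq:surf.Ah}), exactly mirroring the proof of the analogous corollary for the length measure $\ell$. First I would read off from (\ref{eq:surf.Ah}) that $\tau \mapsto \textrm{surf}$ is an affine function of $\tau \in [-1,1]$; since its slope $\frac{2\sqrt{2}-1-\sqrt{3}}{4}$ is strictly positive (because $2\sqrt{2}>1+\sqrt{3}$, equivalently the coefficient of $\lambda_2(\Omega_{\sqrt{2}})$ in the preceding Lemma is negative), the map is strictly increasing and hence a continuous bijection onto the interval bounded by its two endpoint values. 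Evaluating at $\tau=-1$ gives $\textrm{surf}=\sqrt{2}-\frac{2\sqrt{2}-1-\sqrt{3}}{4}\cdot 2=\frac{1+\sqrt{3}}{2}$, and evaluating at $\tau=1$ gives $\textrm{surf}=\sqrt{2}$, so the affine map carries $[-1,1]$ exactly onto $[\frac{1+\sqrt{3}}{2},\sqrt{2}]$.

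Next I would supply the surjectivity of $\tau$ on $\kc_{mcd}$. As in the proof of the corresponding corollary for $\ell$, this is immediate from the realization result of \cite{SPT}: every point of the exact $(\tau,\rho)$-region is attained by some mutually completely dependent copula, so in particular for each $t\in[-1,1]$ there is an $A_h\in\kc_{mcd}$ with $\tau(A_h)=t$. Combining the two ingredients then finishes the argument: given $z\in[\frac{1+\sqrt{3}}{2},\sqrt{2}]$, let $t\in[-1,1]$ be the unique solution of $z=\sqrt{2}-\frac{2\sqrt{2}-1-\sqrt{3}}{4}(1-t)$, pick $A_h\in\kc_{mcd}$ with $\tau(A_h)=t$, and conclude via (\ref{eq:surf.Ah}) that $\textrm{surf}(A_h)=z$.

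If one prefers a self-contained route avoiding \cite{SPT}, I would instead exhibit an explicit continuous family. For $s\in[0,1]$ let $h_s$ be the bijection of $[0,1]$ that is the identity on $[0,1-s]$ and the reflection $x\mapsto 2-s-x$ on $[1-s,1]$; then $h_0=\mathrm{id}$ gives $M$ and $h_1$ gives $W$, and a short computation shows $\lambda_2(\Omega_{\sqrt{2}}^{A_{h_s}})=\frac{s^2}{2}$, which runs continuously through all of $[0,\frac{1}{2}]$. By (\ref{eq:tauAh}) this means $\tau(A_{h_s})=1-2s^2$ sweeps out $[-1,1]$, and the attainment of every $z$ again follows from (\ref{eq:surf.Ah}). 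Either way there is no genuine analytical obstacle; the only point needing care is the sign bookkeeping of the affine slope, which ensures the endpoints of $[-1,1]$ map to the endpoints of $[\frac{1+\sqrt{3}}{2},\sqrt{2}]$ in the correct order.
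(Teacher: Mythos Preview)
Your primary argument is exactly the paper's (implicit) proof: invoke \cite{SPT} to get that $\tau$ is surjective onto $[-1,1]$ on $\kc_{mcd}$, and then transport this via the affine identity (\ref{eq:surf.Ah}), checking the endpoint values. Your additional explicit one-parameter family $(A_{h_s})_{s\in[0,1]}$ with $\lambda_2(\Omega_{\sqrt{2}}^{A_{h_s}})=s^2/2$ is a correct and genuinely self-contained alternative that the paper does not give; it trades the black-box citation of \cite{SPT} for a direct construction, which is arguably cleaner here since only surjectivity of $\tau$ (not the full $\tau$--$\rho$ region) is needed.
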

\begin{remk}
The afore-mentioned interrelations lead to the following seemingly new interpretation of the interplay 
between the two most well-known measures of concordance, Kendall's $\tau$ and Spearman's $\rho$, as studied in \cite{DS,DuSt,SPT} (and the references therein): Within the dense class 
$\mathcal{C}_{mcd}$ maximizing/minimizing Kendall's $\tau$ for a given value of Spearman's $\rho$
is equivalent to maximizing/minimizing the surface area of copulas for a given value of the volume.
Determining the exact $\tau$-$\rho$ region (for which according to \cite{SPT} considering all 
shuffles is sufficient) is therefore reminiscent of the famous isoperimetric inequality bounding the surface area
of a set by a function of the volume (see \cite{Fe}).
\end{remk}

\section{Calculating $\tau, \ell$ and $\textit{surf}$ for mutually completely dependent copulas with 
self-similar support}
We first recall the notion of so-called transformation matrices and the construction of copulas with fractal/self-similar support,
then use these tools to construct mutually completely dependent copulas with self-similar support
and finally derive simple expressions for Kendall's $\tau$ and the length measure of copulas of this type. 

\begin{defn}[\cite{FNRL,T06,FST12}] \label{trafom}
An $n\times m$- matrix $T=(t_{ij})_{i=1\ldots n, \,j=1\ldots m }$ is called 
\emph{transformation matrix} if it fulfills the following four conditions: 
 (i) $\max(n,m)\geq 2$, (ii), all entries are non-negative, (iii) $\sum_{i,j} t_{ij}=1$, and (iv) no row or column
 has all entries $0$.
\end{defn}
\noindent In other words, a transformation matrix is a probability distribution $\tau$ on $(\mathcal{I},2^\mathcal{I})$ with
$\mathcal{I}=I_1 \times I_2$, $I_1=\{1,\ldots,n\}$ and $I_2=\{1,\ldots,m\}$, such that
$\tau(\{i\} \times I_2) >0$ for every $i \in I_1$ and $\tau(I_1 \times \{j\}) >0$ for every $j \in I_2$. 

\noindent Given a transformation matrix $T$ define the vectors
$(a_j)_{j=0}^m, (b_i)_{i=0}^n$ of cumulative column and row sums by
\begin{eqnarray}
a_0&=&b_0=0 \nonumber \\
a_j&=&\sum_{j_0\leq j} \sum_{i=1}^n t_{ij} \hspace{0.5cm} j\in \{1,\ldots,m\} \\
b_i&=&\sum_{i_0\leq i} \sum_{j=1}^m t_{ij} \hspace{0.5cm} i\in \{1,\ldots,n\}. \nonumber
\end{eqnarray}
Considering that $T$ is a transformation matrix both $(a_j)_{j=0}^m$ and $(b_i)_{i=0}^n$ are strictly increasing. Consequently 
$R_{ji}:=[a_{j-1},a_j]\times [b_{i-1}, b_i]$ are compact non-empty rectangles for every $j\in \{1,\ldots,m\}$ and
$i\in \{1,\ldots,n\}$. Defining the contraction 
 $w_{ji}:[0,1]^2 \rightarrow R_{ji}$ by
$$
  w_{ji}(x,y)=\big(a_{j-1} + x (a_j - a_{j-1})\,,\, b_{i-1} + x (b_i - b_{i-1})\big)
$$
therefore yields the IFSP $\{[0,1]^2,(w_{ji})_{j=1\ldots m,i=1\ldots n},(t_{ij})_{j=1\ldots m,i=1\ldots n} \}$.
The induced operator $V_T$ on $\pc$, given by   
\begin{equation} \label{Vop}
   V_T(\mu):= \sum_{j=1}^m \sum_{i=1}^n t_{ij}\,\mu^{w_{ji}}
  \end{equation}
is easily verified to be well-defined (i.e., it maps $\pc$ into itself, again see \cite{FST12,FNRL,T06}) - in the sequel we will 
therefore also consider $V_T$ as a transformation mapping $\kc$ into itself. 
According to \cite{T06} for every transformation matrix $T$ there exists a unique
copula $A_T^*$ with $V_T(A_T^*)=A_T^*$ such that  
\begin{equation}\label{eq:convD1}
\lim_{n \rightarrow \infty} D_1(V_T^n(B),A_T^*)=0
\end{equation}
holds for arbitrary $B \in \kc$ (i.e., $A_T^*$ is the unique, globally attractive fixed point of $V_T$).\\

Suppose now that $2\leq N \in \mathbb{N}$ and let $\pi$ be a permutation of $\{1,\ldots,N\}$. Then the matrix
$T_\pi=(t_{ij})_{i=1\ldots N, \,j=1\ldots N}$, defined by
$$
t_{i,j} = \frac{1}{N} \,\mathbf{1}_{\{j\}}(\pi(i)), \quad i,j \in \{1,\ldots,N\}
$$
is obviously a transformation matrix. To simplify notation we will simply write $V_\pi:=V_{T_\pi}$ as well as
$A_{T_\pi}^*=A_T^*$ in the sequel.
Obviously $V_\pi$ does not only map $\kc$ to $\kc$ but also $\kc_{mcd}$ to $\kc_{mcd}$. Considering that 
(see \cite{T06}) $\kc_{cd}$ is closed in $(\kc,D_1)$ using eq. (\ref{eq:convD1}) it follows immediately that
$A_\pi^* \in \kc_{mcd}$, so there exists some $\lambda$-preserving bijection $h_\pi^*$ with $A_\pi^*=A_{h^*}$.
Since the support of $A_\pi^*$ is self-similar it seems intractable to calculate $\ell(A_\pi^*), \textrm{surf}(A_\pi^*)$ 
and $\tau(A_\pi^*)$ for general $\pi$. The results established in the previous section, however, make it possible to derive 
simple expressions for both quantities.\\

We start with a simple illustrative example and then prove the general result (in a different manner).
\begin{ex}\label{ex1}
Consider $N=3$ and the permutation $\pi=(1,3,2)$. Since, firstly, $A_\pi^* \in \kc_{mcd}$, secondly, 
$A_\pi^*$ is globally attractive, and since, thirdly, $\lim_{n \rightarrow \infty} \tau(V_\pi^n(M))=\tau(A_\pi^*)$ 
implies $\lim_{n \rightarrow \infty} \lambda_2(\Omega_{\sqrt{2}}^{V_\pi^n(M)}) =\lambda_2(\Omega_{\sqrt{2}}^{A_\pi^*})$,  
it suffices to calculate $\lambda_2\left(\Omega_{\sqrt{2}}^{A_{\pi^*}}\right)$ which can be done as follows: 
Obviously we have (see Figure \ref{fig:Omega} for an illustration of the steps 3-6 in the construction)
\begin{eqnarray*}
\lambda_2\left(\Omega_{\sqrt{2}}^{V_\pi(M)}\right) &=& \frac{1}{9} \\
\lambda_2\left(\Omega_{\sqrt{2}}^{V^2_\pi(M)}\right) &=& \frac{1}{9}  + 3\, \frac{1}{9^2} = \frac{1}{9}\left(1+ \frac{1}{3}\right)\\
\lambda_2\left(\Omega_{\sqrt{2}}^{V^3_\pi(M)}\right) &=& \frac{1}{9}  + 3\, \frac{1}{9^2} + 9 \frac{1}{27^2} =
 \frac{1}{9}\left(1+ \frac{1}{3} + \frac{1}{3^2}\right) \\
 \vdots &=& \vdots \\
\lambda_2\left(\Omega_{\sqrt{2}}^{V^n_\pi(M)}\right) &=& \frac{1}{9}\left(1+ \frac{1}{3} + \ldots + \frac{1}{3^{n-1}}\right) 
\end{eqnarray*}
which yields
$$
\lambda_2\left(\Omega_{\sqrt{2}}^{A_{\pi^*}}\right) = \frac{1}{9} \frac{1}{1-\frac{1}{3}} = \frac{1}{6}.
$$
Having that, using eqs. (\ref{eq:lm}), (\ref{eq:tauAh}) and (\ref{eq:surf.Ah}) shows 
\begin{equation*}
\ell(A_\pi^*) = 1-(2-\sqrt{2}) \frac{1}{6}, \quad \tau(A_\pi^*) = 1- 4 \ \frac{1}{6} = \frac{1}{3}
\end{equation*}
as well as 
\begin{equation*}
\textrm{surf}(A_\pi^*)= \sqrt{2} - \left(2 \sqrt{2} - 1 - \sqrt{3}\right)\frac{1}{6}. 
\end{equation*}

\end{ex}

\begin{figure}[!h]
      \begin{center}
     \includegraphics[width = \textwidth]{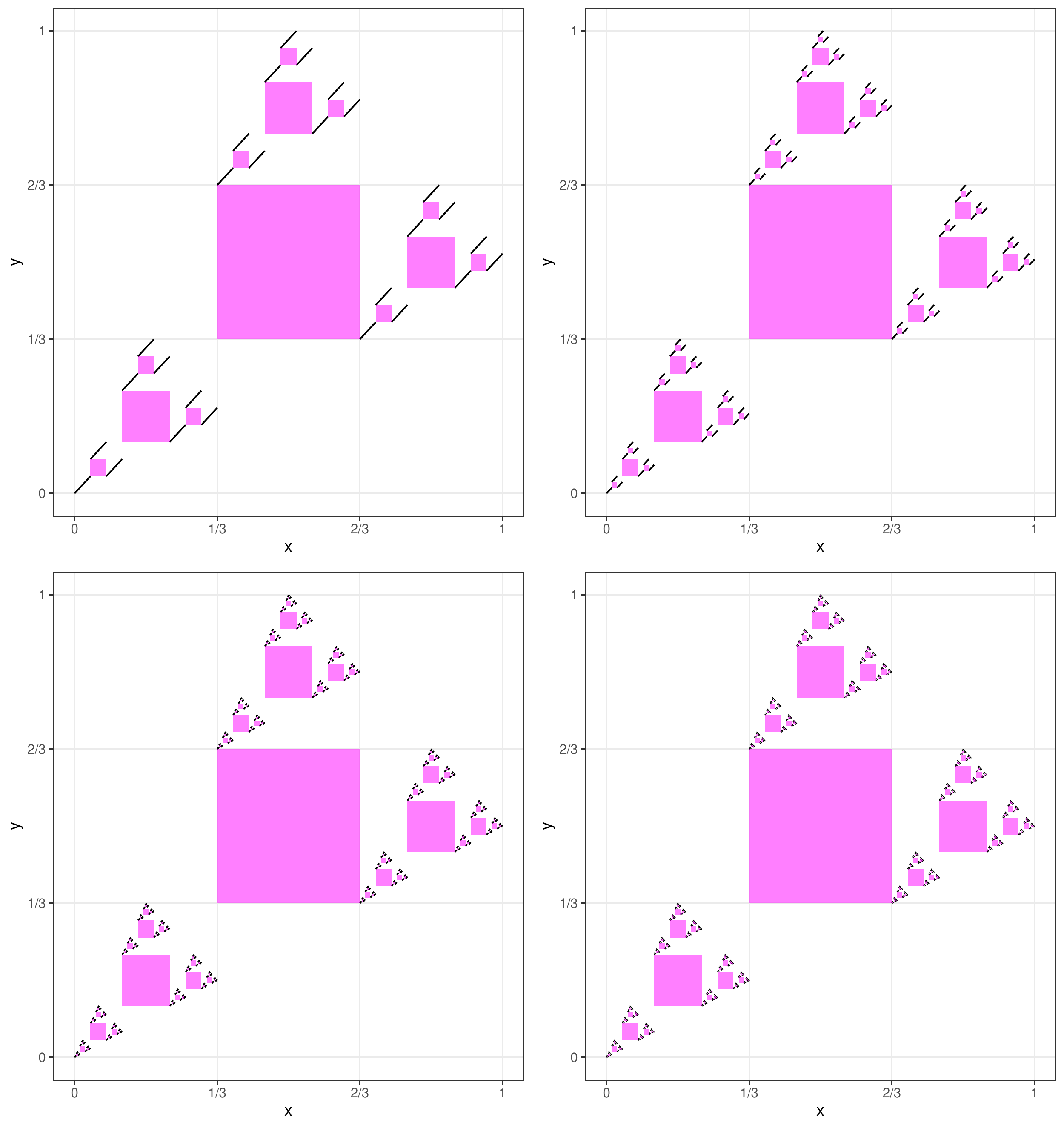}
    \caption{Supports of the copulas $V_\pi^n(M)$ (black line segments) 
    and the corresponding sets $\Omega_{\sqrt{2}}^{V_\pi^n(M)}$ (magenta squares)
    for $n \in \{3,4,5,6\}$ and $\pi=(1,3,2)$ as considered in Example \ref{ex1}.}
    \label{fig:Omega}
\end{center}
\end{figure}

\clearpage

\begin{thm}\label{thm:final}
Suppose that $N\geq 2$ and that $\pi$ is a permutation of $\{1,\ldots,N\}$. Then the following identities hold for
the copula $A_\pi^*$ with self-similar support:
\begin{eqnarray}\label{eq:tau_sss}
\tau\left( A_\pi^*\right) &=& 1- 4 \, \frac{N}{N-1} \lambda_2\left(\Omega_{\sqrt{2}}^{V_\pi(M)}\right)  \\
&=& 1- 4 \, \frac{1}{N(N-1)} \,\#\left\{(i,j) \in \{1,\ldots,N\}^2: \pi(i) <j \textrm{ and } \pi^{-1}(j)<i \right\} \nonumber \\
\ell\left( A_\pi^*\right) &=&  1- (2-\sqrt{2}) \, \frac{N}{N-1} \lambda_2\left(\Omega_{\sqrt{2}}^{V_\pi(M)}\right)  \\
&=& 1- (2-\sqrt{2})  \, \frac{1}{N(N-1)} \,\#\left\{(i,j) \in \{1,\ldots,N\}^2: \pi(i) <j \textrm{ and } \pi^{-1}(j)<i \right\} \nonumber \\
\textrm{surf}\left( A_\pi^*\right) &=&  \sqrt{2} - \left(2 \sqrt{2} - 1 - \sqrt{3}\right) \, \frac{N}{N-1} \lambda_2\left(\Omega_{\sqrt{2}}^{V_\pi(M)}\right) 
\end{eqnarray}
\end{thm}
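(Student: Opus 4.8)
The plan is to deduce all three identities from a single quantity, $\lambda_2\big(\Omega_{\sqrt 2}^{A_\pi^*}\big)$, using the formulas from Section~\ref{sec:inter}. Since $A_\pi^*\in\kc_{mcd}$, eqs.~(\ref{eq:tauAh}), (\ref{eq:lm}) and~(\ref{eq:surface}) write $\tau(A_\pi^*)$, $\ell(A_\pi^*)$ and $\textrm{surf}(A_\pi^*)$ as \emph{fixed} affine functions of $\lambda_2\big(\Omega_{\sqrt 2}^{A_\pi^*}\big)$, and each right-hand side of the theorem is the same affine function evaluated at $\tfrac{N}{N-1}\lambda_2\big(\Omega_{\sqrt 2}^{V_\pi(M)}\big)$. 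Hence it suffices to prove the two equalities
$$
\lambda_2\big(\Omega_{\sqrt 2}^{A_\pi^*}\big)=\frac{N}{N-1}\,\lambda_2\big(\Omega_{\sqrt 2}^{V_\pi(M)}\big)=\frac{1}{N(N-1)}\,\#\big\{(i,j)\in\{1,\dots,N\}^2:\pi(i)<j,\ \pi^{-1}(j)<i\big\}.
$$

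First I would make the self-similar structure of $A_\pi^*$ explicit at the level of transformations. A short computation with~(\ref{Vop}) and the contractions $w_{\pi(i),i}(x,y)=\big(\tfrac{\pi(i)-1+x}{N},\tfrac{i-1+y}{N}\big)$ shows that, for $A_h\in\kc_{mcd}$, one has $V_\pi(A_h)=A_{\tilde h}$, where $\tilde h$ maps the $k$-th $x$-block $[\tfrac{k-1}{N},\tfrac{k}{N}]$ affinely onto the $\pi^{-1}(k)$-th $y$-block via $\tilde h\big(\tfrac{k-1+s}{N}\big)=\tfrac{\pi^{-1}(k)-1+h(s)}{N}$ for $s\in[0,1]$; consequently $\tilde h^{-1}$ maps the $l$-th $y$-block into the $\pi(l)$-th $x$-block. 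Applied to the fixed point $A_\pi^*=A_{h^*}$ this yields the self-similarity relation $h^*\big(\tfrac{k-1+s}{N}\big)=\tfrac{\pi^{-1}(k)-1+h^*(s)}{N}$, which is what I would exploit below.

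Next I would split $[0,1]^2$ into the $N^2$ blocks $B_{k,l}=[\tfrac{k-1}{N},\tfrac{k}{N}]\times[\tfrac{l-1}{N},\tfrac{l}{N}]$ and evaluate $\Omega_{\sqrt 2}^{A_{h^*}}=\{(u,v):h^*(u)\le v,\ (h^*)^{-1}(v)\le u\}$ block by block. For $l\ne\pi^{-1}(k)$ the value $h^*(u)$ lies, up to a $\lambda_2$-null set of boundary points, in a $y$-block disjoint from the one containing $v$, so $h^*(u)\le v$ is a.e.\ equivalent to $\pi^{-1}(k)<l$; symmetrically $(h^*)^{-1}(v)\le u$ is a.e.\ equivalent to $\pi(l)<k$. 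Thus $B_{k,l}\subseteq\Omega_{\sqrt 2}^{A_{h^*}}$ up to a null set iff $\pi^{-1}(k)<l$ and $\pi(l)<k$, and otherwise $B_{k,l}$ meets $\Omega_{\sqrt 2}^{A_{h^*}}$ in a null set. On the diagonal block $B_{k,\pi^{-1}(k)}$ the self-similarity relation turns, after rescaling by $1/N$ in each coordinate, the two inequalities into the defining inequalities of $\Omega_{\sqrt 2}^{A_{h^*}}$ on the unit square, whence $\lambda_2\big(\Omega_{\sqrt 2}^{A_{h^*}}\cap B_{k,\pi^{-1}(k)}\big)=\tfrac{1}{N^2}\lambda_2\big(\Omega_{\sqrt 2}^{A_{h^*}}\big)$. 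Summing over all $N^2$ blocks — and noting that $l=\pi^{-1}(k)$ forces $\pi(l)=k$, so diagonal blocks never satisfy the off-diagonal condition — gives the self-referential identity
$$
\lambda_2\big(\Omega_{\sqrt 2}^{A_\pi^*}\big)=\frac{1}{N}\,\lambda_2\big(\Omega_{\sqrt 2}^{A_\pi^*}\big)+\frac{1}{N^2}\,\#\big\{(k,l):\pi^{-1}(k)<l,\ \pi(l)<k\big\}.
$$
The identical block analysis applied to $V_\pi(M)=A_{h_0}$, where $h_0$ is increasing and affine on each block, shows $\lambda_2\big(\Omega_{\sqrt 2}^{V_\pi(M)}\big)=\tfrac{1}{N^2}\#\{(k,l):\pi^{-1}(k)<l,\ \pi(l)<k\}$ (the diagonal blocks now contribute only the null set $\{h_0(s)=t\}$), and this count coincides with the one in the theorem after relabelling $(k,l)\mapsto(j,i)$. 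Solving the displayed linear equation for $\lambda_2\big(\Omega_{\sqrt 2}^{A_\pi^*}\big)$ then gives $\lambda_2\big(\Omega_{\sqrt 2}^{A_\pi^*}\big)=\tfrac{N}{N-1}\lambda_2\big(\Omega_{\sqrt 2}^{V_\pi(M)}\big)$, and substituting into~(\ref{eq:tauAh}), (\ref{eq:lm}) and~(\ref{eq:surface}) produces the three asserted formulas.

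I expect the main obstacle to be the measure-zero bookkeeping in the block decomposition: one must verify that block boundaries, the single ``diagonal fibre'' inside each diagonal block, and the degenerate cases $\pi^{-1}(k)=l$ or $\pi(l)=k$ all carry zero $\lambda_2$-mass, so that the self-referential identity is exact rather than merely up to an error term; here the fact that $h^*$ is $\lambda$-preserving, together with the fixed-point relation $V_\pi(A_\pi^*)=A_\pi^*$, is what makes the diagonal blocks reproduce $\Omega_{\sqrt 2}^{A_\pi^*}$ faithfully at scale $1/N$. Alternatively — and this is the route of Example~\ref{ex1} — one can bypass the self-similarity of $h^*$ by applying the same block analysis to $V_\pi^{n+1}(M)=V_\pi\big(V_\pi^n(M)\big)$, obtaining the recursion $\lambda_2\big(\Omega_{\sqrt 2}^{V_\pi^{n+1}(M)}\big)=\lambda_2\big(\Omega_{\sqrt 2}^{V_\pi(M)}\big)+\tfrac1N\lambda_2\big(\Omega_{\sqrt 2}^{V_\pi^n(M)}\big)$; summing the resulting geometric series and using that $V_\pi^n(M)\to A_\pi^*$ in $D_1$ (hence uniformly), together with continuity of $\tau$ and eq.~(\ref{eq:tauAh}), again yields $\lambda_2\big(\Omega_{\sqrt 2}^{V_\pi^n(M)}\big)\to\tfrac{N}{N-1}\lambda_2\big(\Omega_{\sqrt 2}^{V_\pi(M)}\big)=\lambda_2\big(\Omega_{\sqrt 2}^{A_\pi^*}\big)$.
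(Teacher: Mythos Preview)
Your proposal is correct and follows essentially the same route as the paper: both reduce everything to the identity $\lambda_2\big(\Omega_{\sqrt 2}^{A_\pi^*}\big)=\tfrac{N}{N-1}\lambda_2\big(\Omega_{\sqrt 2}^{V_\pi(M)}\big)$, obtained from a self-referential equation, and then plug into eqs.~(\ref{eq:lm}), (\ref{eq:tauAh}) and~(\ref{eq:surface}). The only difference is packaging---the paper asserts the recursion $\lambda_2\big(\Omega_{\sqrt 2}^{V_\pi(A_h)}\big)=\lambda_2\big(\Omega_{\sqrt 2}^{V_\pi(M)}\big)+\tfrac1N\lambda_2\big(\Omega_{\sqrt 2}^{A_h}\big)$ for general $A_h\in\kc_{mcd}$ without proof and then sets $A_h=A_\pi^*$, whereas your block decomposition is exactly the argument that verifies this recursion (and you also make the counting interpretation of $\lambda_2\big(\Omega_{\sqrt 2}^{V_\pi(M)}\big)$ explicit, which the paper leaves to the reader).
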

\begin{proof}
First of all notice that for every $A_h \in \kc_{mcd}$ we have
\begin{equation}\label{eq:temp.final}
\lambda_2\left(\Omega_{\sqrt{2}}^{V_\pi(A_h)}\right) = \lambda_2\left(\Omega_{\sqrt{2}}^{V_\pi(M)}\right) + 
\frac{1}{N} \lambda_2\left(\Omega_{\sqrt{2}}^{A_h}\right).
\end{equation}
Since $A_\pi^*=A_{h^*}$ for some $\lambda$-preserving bijection $h^*$ and since $V_\pi(A_\pi^*)=A_\pi^*$ holds, 
eq. (\ref{eq:temp.final}) implies
$$
\lambda_2\left(\Omega_{\sqrt{2}}^{A_\pi^*}\right) = \lambda_2\left(\Omega_{\sqrt{2}}^{V_\pi(M)}\right) + 
\frac{1}{N} \lambda_2\left(\Omega_{\sqrt{2}}^{A_\pi^*}\right),
$$
from which we conclude
$$
\lambda_2\left(\Omega_{\sqrt{2}}^{A_\pi^*}\right)  = \frac{N}{N-1} \lambda_2\left(\Omega_{\sqrt{2}}^{V_\pi(M)}\right). 
$$
Having this, the desired identities follow by applying eqs. (\ref{eq:lm}), (\ref{eq:tauAh}), and (\ref{eq:surf.Ah}). 
\end{proof}

We conclude the paper with the following example.

\begin{ex}\label{ex2}
Consider $N=4$ and the permutation $\pi=(3,1,4,2)$. Figure \ref{fig:Omega2} depicts the first four steps
in the construction process of the corresponding copula $A_\pi^*$. 
Since in this case we have 
\begin{equation*}
\lambda_2\left(\Omega_{\sqrt{2}}^{V_\pi(M)}\right) = \frac{3}{16},
\end{equation*}
applying Theorem \ref{thm:final} directly yields
\begin{equation*}
\tau(A_\pi^*) = 0 ,\quad \ell(A_\pi^*) = \frac{1}{2} + \frac{\sqrt{2}}{4}
\end{equation*}
as well as 
\begin{equation*}
\textrm{surf}(A_\pi^*)= \frac{\sqrt{2}}{2} + \frac{1}{4} + \frac{\sqrt{3}}{4}. 
\end{equation*}

\end{ex}

\begin{figure}[!h]
      \begin{center}
     \includegraphics[width = \textwidth]{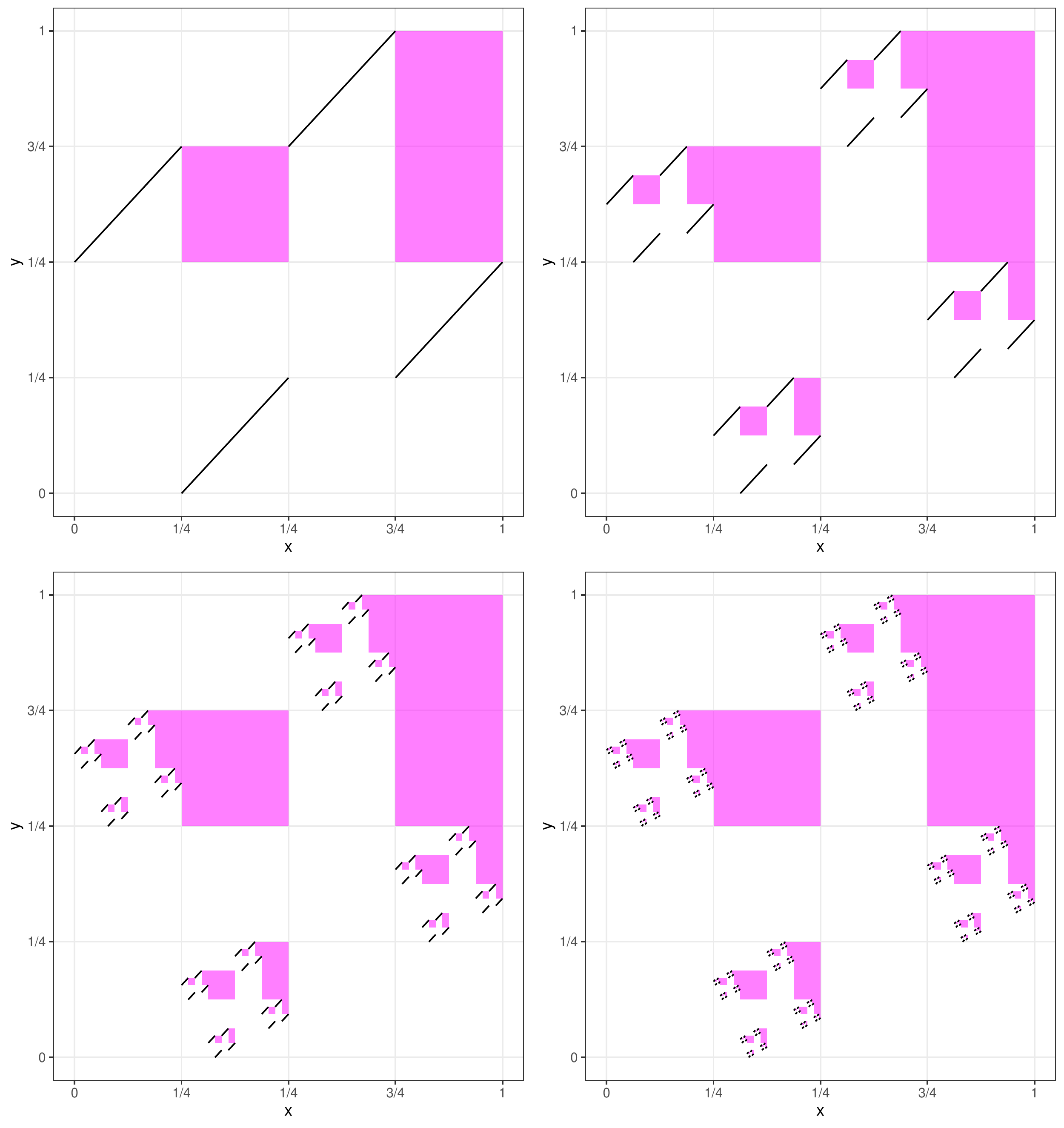}
    \caption{Supports of the copulas $V_\pi^n(M)$ (black line segments) 
    and the corresponding sets $\Omega_{\sqrt{2}}^{V_\pi^n(M)}$ (magenta rectangles)
    for $n \in \{1,2,3,4\}$ and $\pi=(3,1,4,2)$ as considered in Example \ref{ex2}.}
    \label{fig:Omega2}
\end{center}
\end{figure}

\vspace{1cm}

\noindent \textbf{Acknowledgement} \\
The second author gratefully acknowledge the support of the
WISS 2025 project `IDA-lab Salzburg’ (20204-WISS/225/197-2019 and 20102-F1901166-KZP).

%%%%%%%%%%%%%%%%%%%%%%%%%%%%%%%%%%%%%%%%%%%%%%%%%%%%%%%%%%%%%%%%%%%%%%%%%%%%%%%%%%%%%%%%

\end{document}